\newtheorem{thm}{Theorem}[section]
\newtheorem{dfn}[thm]{Definition}
\newtheorem{exa}[thm]{Example}
\newtheorem{lem}[thm]{Lemma}
\newcommand{\R}{\mathbb R}
\title
[Ribbon-clasp surface-links and normal forms]
{Ribbon-clasp surface-links and normal forms of singular surface-links}
\author[S.~Kamada]{Seiichi KAMADA}
\address[S.~Kamada]{Department of Mathematics, Osaka City University, Sugimoto 3-3-138, Sumiyoshi-ku, Osaka 558-8585, Japan}
\email{skamada@sci.osaka-cu.ac.jp}
\author[K.~Kawamura]{Kengo KAWAMURA}
\address[K.~Kawamura]{Department of Mathematics, Osaka City University, Sugimoto 3-3-138, Sumiyoshi-ku, Osaka 558-8585, Japan}
\email{k.kawamura0403@gmail.com}
\keywords{Surface-link; ribbon-clasp surface-link; ribbon singularity; clasp singularity; normal form.}
\subjclass[2000]{57M25, 57M27, 57Q45}
\begin{document}

\maketitle

\begin{abstract}
We introduce the notion of a ribbon-clasp surface-link, which is a generalization of a ribbon surface-link.  We generalize the notion of a normal form on embedded surface-links to the case of immersed surface-links and prove that any (immersed) surface-link can be described in a normal form.  It is known that an embedded  surface-link is a ribbon surface-link if and only if it can be described in a symmetric normal form.  We prove that 
an (immersed) surface-link is a ribbon-clasp surface-link if and only if it can be described in a symmetric normal form.  We also introduce the notion of a ribbon-clasp normal form, which is a simpler version of a symmetric normal form.  
\end{abstract}


\section{Introduction}

In this paper an {\it immerse surface-knot} or simply a {\it surface-knot} 
means a closed connected and oriented surface 
generically immersed in $\R^4$. When it is embedded, we also call it an {\it embedded surface-knot}.  A {\it surface-link} is a disjoint union of surface-knots. 
Two surface-links are said to be {\it equivalent} if they are ambient isotopic. 

A surface-link is called {\it trivial} or {\it unknotted} if it is the boundary of a disjoint union of embedded handlebodies in $\R^4$.  A surface-link is called {\it ribbon} if it is the boundary of immersed handlebodies in $\R^4$ whose multiple point set is a union of ribbon singularities.  (The definition of a ribbon singularity is given in Section~\ref{sect:singularity}.  For an immersion $f : M \to \R^4$ of a compact $3$-manifold $M$,  the {\it boundary} of the immersed $3$-manifold $f(M)$ means the image $f(\partial M)$ of the boundary $\partial M$ of $M$.)   

\begin{dfn}{\rm 
A surface-link is {\it ribbon-clasp} if it is the boundary of immersed handlebodies in $\R^4$ whose multiple point set is a union of ribbon singularities and clasp singularities. 
}\end{dfn} 

The definition of a clasp singularity is given in Section~\ref{sect:singularity}. 
By definition, a trivial surface-link is ribbon, and a ribbon surface-link is ribbon-clasp.  

Let $F$ be a ribbon-clasp surface-link and let $f: M \to \R^4$ be an immersion of handlebodies $M$ with $F= f(\partial M)$ such that the multiple point set is a union of ribbon singularities and $c$ $(\geq 0)$ clasp singularities.  There are two double  points of $F$ in each clasp singularity, and one of them is a positive double point and the other is negative.  Thus $F$ has $c$ positive double points and $c$ negative double points, and the normal Euler number of $F$ is zero.  

\begin{thm} \label{thm:AA}
For a surface-link $F$, the following conditions are equivalent. 
\begin{itemize}
\item[(1)] $F$ is a ribbon-clasp surface-link.  
\item[(2)] $F$ is obtained from a ribbon surface-link by finger moves.  
\item[(3)] $F$ is obtained from a trivial $2$-link by $1$-handle surgeries and finger moves. 
\item[(4)] $F$ is obtained from an M-trivial $2$-link by $1$-handle surgeries. 
\end{itemize} 
\end{thm}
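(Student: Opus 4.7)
The plan is to establish the cyclic chain of implications $(1) \Rightarrow (2) \Rightarrow (3) \Rightarrow (4) \Rightarrow (1)$. The guiding principle is that a clasp singularity of an immersed $3$-manifold corresponds, on the boundary surface, to a Whitney pair of transverse double points (one positive and one negative) bounded by an embedded Whitney disk in the image --- exactly the configuration produced by a finger move applied to an embedded surface.

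For $(1) \Rightarrow (2)$, take the given immersion $f\colon M \to \R^4$ of handlebodies with only ribbon and $c$ clasp singularities. Each clasp singularity provides a Whitney disk sitting inside $f(M)$, and the associated Whitney move cancels the Whitney pair on the boundary without creating new multiple points. After performing all $c$ such moves one arrives at an immersion whose multiple point set consists solely of ribbon singularities, so its boundary $F'$ is a ribbon surface-link; reversing, $F$ is the result of applying $c$ finger moves to $F'$. The implication $(2) \Rightarrow (3)$ is immediate from the known description of every ribbon surface-link as the result of $1$-handle surgeries on a trivial $2$-link.

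For $(3) \Rightarrow (4)$, the task is to rearrange the operations: starting from a trivial $2$-link $L_0$, one first applies $1$-handle surgeries and then $c$ finger moves, and one wishes to perform the finger moves first (producing an M-trivial $2$-link) and then attach the $1$-handles. Since each finger move is supported in a small $4$-ball around an arc, after an ambient isotopy this arc can be assumed to avoid the cocores of the attached $1$-handles, whereupon the finger move descends to a finger move on $L_0$ with the same composite effect. Finally, $(4) \Rightarrow (1)$ is constructive: the immersed $3$-balls bounded by the M-trivial $2$-link (with only clasp singularities), together with embedded $1$-handles attached in general position, assemble into an immersion of handlebodies bounding $F$ whose multiple point set consists of the original clasp singularities plus ribbon singularities arising from transverse intersections of the $1$-handles with the $3$-balls.

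The main obstacle is the commutation argument in $(3) \Rightarrow (4)$: one must justify that, after a suitable ambient isotopy, each finger move performed after a $1$-handle surgery can be pulled back to a finger move on the surface before that surgery, yielding the same final surface-link. This requires arranging the finger-move arcs to be disjoint from the $1$-handle cocores, which in turn requires controlling how the arcs thread through the $1$-handles. The construction in $(4) \Rightarrow (1)$ also requires care to verify that all extra intersections created by $1$-handle attachment can be made to be ribbon singularities, but this is standard by general position once the $1$-handles are chosen as sufficiently thin embedded tubes transverse to the immersed $3$-balls.
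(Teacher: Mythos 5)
Your chain $(1)\Rightarrow(2)\Rightarrow(3)\Rightarrow(4)\Rightarrow(1)$ breaks at $(3)\Rightarrow(4)$. You assert that performing the finger moves on the trivial $2$-link first ``produces an M-trivial $2$-link.'' It does not: an M-trivial $2$-link is by definition a \emph{split} union of a trivial $2$-link and Montesinos twins, whereas a finger move applied to the trivial $2$-link either (in the classical picture you use) puts a cancelling pair of double points on the link itself, or (in the paper's formulation) tubes a twin onto it by two $1$-handles; in either case the twins are attached, not split, so the intermediate object is not M-trivial. What condition (4) needs is that the twins remain split and that \emph{all} tubing, including the tubes of the finger moves, be counted among the $1$-handle surgeries. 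That is exactly the paper's observation: since a finger move along a chord is by its definition ``add a split Montesinos twin and perform two $1$-handle surgeries,'' the composite ``trivial $2$-link $+$ $1$-handle surgeries $+$ finger moves'' can be rewritten, after making the finger-move chords disjoint from the handles, as ``(trivial $2$-link $\sqcup$ twins) $+$ $1$-handle surgeries,'' i.e.\ (4). Your commutation argument (isotoping the finger-move arcs off the ``cocores'') does not supply this rewriting, and even granting the commutation the conclusion you draw is false; also the relevant disjointness is from the handles themselves, not their cocores.

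A related, more global issue: you work throughout with the classical definition of a finger move (inverse of the Whitney trick), while the theorem, as stated in this paper, uses the alternative definition via Montesinos twins and $1$-handle surgeries; the translation between the two is never addressed in your write-up, and it is precisely this translation that $(3)\Rightarrow(4)$ requires. Your $(1)\Rightarrow(2)$ is a genuinely different and workable idea (undo each clasp by a Whitney move along the clasp disk, which lies in $f(M)$ and has interior disjoint from $F$), but as written it glosses over the framing of the clasp disk and the fact that the bounding immersed handlebodies must be retracted along with the move; the paper instead excises regular neighborhoods of the preimage disks of the clasps, which exhibits $F$ at once as a ribbon surface-link with split twins attached by $1$-handles and thus feeds directly into both (2) and (4). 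Your $(4)\Rightarrow(1)$ agrees with the paper's argument.
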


$1$-handle surgeries, finger moves and M-trivial $2$-links are explained in Section~\ref{sect:singularity} and Theorem~\ref{thm:AA} is proved in Section~\ref{sect:proofAA}.   

In Section~\ref{sect:normalform} we define a normal form for a ribbon-clasp surface-link called a {\it ribbon-clasp normal form} 
 in terms of the motion picture method.  We prove the following theorem in Section~\ref{sect:proofAB}. 

\begin{thm} \label{thm:AB}
A surface-link is ribbon-clasp if and only if it is equivalent to a surface-link in a ribbon-clasp normal form.  
\end{thm}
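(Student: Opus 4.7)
The plan is to prove both implications by reducing to the characterizations in Theorem~\ref{thm:AA}. The ``if'' direction should follow essentially from the definition of ribbon-clasp normal form given in Section~\ref{sect:normalform}: a surface-link presented in such a form is, by its very construction, a motion-picture description that builds $F$ from a trivial $2$-link by $1$-handle surgeries followed by finger moves placed in designated slabs. The implication (3) $\Rightarrow$ (1) of Theorem~\ref{thm:AA} then immediately gives that $F$ is ribbon-clasp.

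For the ``only if'' direction, I would start with a ribbon-clasp surface-link $F$ and apply Theorem~\ref{thm:AA}, (1) $\Rightarrow$ (3), to obtain a presentation of $F$ as a trivial $2$-link $L_0$ together with $1$-handles $h_1, \ldots, h_k$ and finger-move arcs $a_1, \ldots, a_c$. The strategy is then to ambient-isotope this data in $\R^4 = \R^3 \times \R$ so that the resulting description decomposes into stacked horizontal slabs: a bottom slab in which $L_0$ sits as a standard trivial $2$-link in level-preserving position; a middle slab where the $1$-handle attaching bands lie at a fixed time level and are mutually disjoint; and a top slab in which each finger-move arc is realized by a standard local model, producing exactly one pair of clasp singularities (one positive, one negative). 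This mirrors the classical normal form argument for embedded surface-links and its symmetric refinement for ribbon surface-links, but now must also accommodate the finger-move data. The final step is to verify that the resulting stratified picture matches the definition of ribbon-clasp normal form from Section~\ref{sect:normalform}, which should be essentially tautological once everything has been arranged in the three-slab form.

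The principal technical obstacle is the placement of finger moves. A finger move is a codimension-zero operation in $\R^4$ and is not literally localized to a single time slice, so its guiding arc must be isotoped so as to be disjoint from the band cores and from all other finger-move arcs, using ambient isotopy of $\R^4$ that does not disturb the previously-arranged trivial $2$-link or band structure. General position in $\R^4$ makes the arcs pairwise disjoint generically, but pushing them vertically into a single standardized local model requires the same type of careful argument used in the symmetric-normal-form construction for ribbon surface-links, with the additional subtlety that the two double points produced by each finger move have opposite signs and must together form a single clasp singularity fitting a prescribed model.
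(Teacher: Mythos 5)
Your overall strategy---reduce both implications to Theorem~\ref{thm:AA}---is the same as the paper's, but you route through characterization (3) (trivial $2$-link $+$ $1$-handles $+$ finger moves), whereas the paper routes through characterization (4) (M-trivial $2$-link $+$ $1$-handle surgeries). This choice is not cosmetic: it is exactly what lets the paper avoid the difficulty you correctly identify and then leave unresolved, namely how to standardize a finger move into a prescribed local model inside a single time slab. In the paper's argument no finger move ever has to be localized. An M-trivial $2$-link is a split union of trivial spheres and Montesinos twins, and each Montesinos twin already has a canonical motion picture symmetric about $\R^3\times\{0\}$: a Hopf link sitting at the middle levels, split by a crossing change above and (symmetrically) below, then capped off. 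So the only data left to put in position are the cores of the $1$-handles, and since $1$-handles are determined up to equivalence by their cores as chords (Boyle, Hosokawa--Kawauchi), these cores can be isotoped into $\R^3\times\{0\}$ and thickened to $D_i\times[-1/2,1/2]$, producing exactly the middle $1$-surgery $L\to L_0\to L$ of the normal form. Your version would instead have to re-prove something equivalent to the $(3)\Leftrightarrow(4)$ step of Theorem~\ref{thm:AA} while simultaneously controlling the ambient position of the finger-move arcs; as written, that step is a genuine gap.

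A second, smaller inaccuracy is in your ``if'' direction. The ribbon-clasp normal form does not literally exhibit ``finger moves placed in designated slabs,'' and the decomposition is not tautological. What one actually observes is that replacing the middle $1$-surgery $L\to L_0\to L$ by identity deformations leaves a realizing surface in which each Hopf link of the H-trivial link $L$, together with its symmetric Hopf-splitting deformations above and below, sweeps out a Montesinos twin, while the remaining components sweep out trivial spheres; hence that surface is an M-trivial $2$-link, and $F$ is obtained from it by the $1$-handle surgeries along $D_i\times[-1/2,1/2]$. That is a short argument, but it is the content of the implication, and invoking $(4)\Rightarrow(1)$ of Theorem~\ref{thm:AA} is then immediate. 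I would recommend rewriting both directions using characterization (4); the finger-move bookkeeping you flag as the principal obstacle then disappears entirely.
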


In Section~\ref{sect:normalform} we also define a normal form for a (singular) surface-link.  It is a generalization of a normal form for an embedded surface-link 
defined in Kawauchi-Shibuya-Suzuki \cite{KSS1}.  

\begin{thm} \label{thm:B}
Any surface-link $F$ is equivalent to a surface-link $F'$ in a normal form.  Moreover, 
for any non-negative integers $c'_+$, $c'_-$, $c''_+$ and $c''_-$ such that $c'_+ + c''_+$ is the number of positive double points of $F$ and $c'_- + c''_-$ is the number of negative double points of $F$, we may assume that $F'$ has $c'_+$ positive double points and $c'_-$ negative double points in $\R^3 \times [0, \infty)$ and $c''_+$ positive double points and $c''_-$ negative double points in $\R^3 \times (-\infty, 0]$.  
\end{thm}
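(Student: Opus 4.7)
The strategy is to adapt the Kawauchi--Shibuya--Suzuki normal form argument for embedded surface-links to the immersed setting by treating the double points of $F$ as additional marked data that may be moved freely in the height direction. First I would isotope $F$ so that the projection $h:\R^4 \to \R$ onto the last coordinate restricts to a Morse function on $F$ away from its (finitely many) double points, with all critical values of $h|_F$ distinct from the heights of the double points; around each double point $p_i$ I would also choose a small $4$-ball $B_i$ in which $F \cap B_i$ is the standard model of a transverse intersection of two disks and $h|_{F \cap B_i}$ has no critical points. This puts $F$ into a ``stratified Morse'' position in which the only special heights of $h|_F$ are the Morse critical values and the heights of the double points, with each special height corresponding to a single event.

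Next I would apply the standard rearrangement argument, exactly as in \cite{KSS1}, to isotope the embedded part of $F$ so that all minima of $h|_F$ lie in $\R^3 \times (-\infty, -1]$ and bound a trivial system of disks there, all maxima lie in $\R^3 \times [1, \infty)$ and bound a trivial system of disks there, and all saddles lie in $\R^3 \times [-1, 0] \cup \R^3 \times [0, 1]$. Since each $B_i$ can be chosen arbitrarily small and lies at a regular level of $h|_F$, the supporting isotopies can be performed either in the complement of the $B_i$ or by dragging these balls along coherently; this step is formally identical to the embedded case, except that the middle level cross section at $\R^3 \times \{0\}$ is a classical link that may differ from the embedded model only by the presence of the (vertically separated) double points.

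Finally, given target splittings $c'_+ + c''_+ = d_+$ and $c'_- + c''_- = d_-$, I would move each double point $p_i$ to either the half-space $\R^3 \times [0,\infty)$ or $\R^3 \times (-\infty, 0]$ as prescribed, by an ambient isotopy supported in a thin tube along a short vertical arc. Sliding a double point past another double point is a transverse commutation and is immediate, while sliding it past a Morse critical point of $h|_F$ is handled by a local model at the critical level that exchanges their vertical orderings; in both cases the sign of the double point is preserved because the move is realized by an orientation-preserving ambient isotopy of $\R^4$. The main obstacle is precisely this height-commutation of a double point with a Morse critical point: one must verify that the local move introduces no new double points and preserves both the level slice topology and the normal form structure already arranged. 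This is done by a direct local analysis analogous to the Cerf-theoretic exchanges used in the embedded KSS proof, exploiting that a double point is a codimension-two event in $\R^4$ while the critical locus of $h|_F$ is locally codimension one in $F$.
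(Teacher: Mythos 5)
Your overall geometric intuition---put $F$ in Morse position, rearrange the embedded part as in Kawauchi--Shibuya--Suzuki, then transport each double point vertically into the prescribed half-space---is the right picture, but the proposal has a genuine gap: it never confronts what the vertical transport does to the cross-sectional links, and consequently never verifies the actual conditions in the definition of a normal form. Being in normal form is not the same as having the double points in the correct half-spaces: the definition requires that \emph{all} double points occur precisely in the two Hopf-splitting deformations $O_-\to L_-$ and $L_+\to O_+$, i.e.\ sandwiched between the trivial links bounding the minimal/maximal disks and the H-trivial links $L_\pm$, with every intermediate level between the saddles and the double points being a split union ``old link $\sqcup$ Hopf links.'' A double point parked at some arbitrary regular level of $\R^3\times[0,\infty)$, possibly interleaved with saddle bands, does not satisfy this, so your final configuration is not yet a normal form and you have not explained how to get to one.

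The missing idea is the paper's Lemmas~\ref{lem:doubleHopfA} and~\ref{lem:doubleHopfB}: a simple crossing change deformation (the motion-picture event realizing one double point) can be replaced, up to ambient isotopy rel the ends, by a $1$-surgery along two bands that splits off a Hopf link, followed by an isotopy parking that Hopf link in a small ball $W$ disjoint from everything else; the Hopf link is then carried unchanged past all subsequent link transformations and only split (by the crossing change, which is where the double point now lives) at the very end, just before being capped by a $2$-surgery. This is exactly the bookkeeping your ``thin tube along a vertical arc'' produces---the trace of the dragged sheets is a split Hopf link component in every intermediate level---but making it explicit is what (a) shows the commutation past saddles and other double points is harmless, (b) produces the H-trivial links $L_\pm$ and the Hopf-splitting deformations demanded by conditions (1)--(2) of the definition, and (c) lets one then quote the KSS rearrangement for the remaining middle block of $1$-surgeries to obtain $L_-\to K_-\to L_0\to K_+\to L_+$ with the fusion/fission and band-disjointness conditions. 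Your step of ``exchanging vertical orderings at a critical level by a local model'' is asserted but not supplied; in the paper it is precisely the content of Lemma~\ref{lem:doubleHopfB}(5), where the split Hopf link in $W$ commutes with every later transformation because $W\times[a,b]$ misses the rest of the realizing surface. Without this mechanism the proof of the ``moreover'' clause (and indeed of the main clause) is incomplete.
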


When $F$ is an embedded surface-link, Theorem~\ref{thm:B} 
 is the   theorem on normal forms given in \cite{KSS1} (cf. \cite{Kam2002}).  

We say that a surface-link is in a {\it symmetric normal form} if it is in a normal form and it is symmetric with respect to the hyperplane $\R^3 \times \{0\}$.  

\begin{thm} \label{thm:C}
A surface-link is ribbon-clasp if and only if 
it is equivalent to a surface-link in a symmetric normal form.
\end{thm}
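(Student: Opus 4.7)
The plan is to prove the two directions of Theorem~\ref{thm:C} separately.

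For the forward direction, suppose $F$ is ribbon-clasp. By Theorem~\ref{thm:AB}, $F$ is equivalent to a surface-link in a ribbon-clasp normal form. Since a ribbon-clasp normal form is, as indicated in the abstract and in the introduction to Section~\ref{sect:normalform}, a special (simpler) case of a symmetric normal form, $F$ is equivalent to a surface-link in a symmetric normal form. This direction is thus essentially a corollary of Theorem~\ref{thm:AB}.

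For the converse, suppose $F$ is in a symmetric normal form. The strategy is to verify condition~(4) of Theorem~\ref{thm:AA}: we will exhibit $F$ as obtained from an M-trivial $2$-link by $1$-handle surgeries. The normal form presents $F$ as a motion picture with minima at some level $-t_0$, saddles either at the central level $t=0$ or at symmetrically paired levels $\pm t$, maxima at $+t_0$, and double points distributed in intermediate regions; the symmetry hypothesis pairs the minima with the maxima by reflection, leaves the set of saddles invariant, and places the double points in mirror pairs about $\R^3 \times \{0\}$.

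The key step is to undo all the $1$-handle surgeries corresponding to the saddles, in a symmetry-preserving way. The resulting surface-link $F_0$ has no saddles: each component is a $2$-sphere consisting of a single minimum at $-t_0$, a cylindrical middle portion carrying the paired double points, and a single maximum at $+t_0$. This is precisely an M-trivial $2$-link as defined in Section~\ref{sect:singularity}. Reversing the undoing exhibits $F$ as obtained from $F_0$ by $1$-handle surgeries, so Theorem~\ref{thm:AA}(4) yields that $F$ is ribbon-clasp.

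The main obstacle is ensuring that the symmetric undoing step produces a disjoint union of $2$-spheres rather than components of higher genus, and that the attaching arcs of the $1$-handles can be chosen so as not to interfere with the double points. An Euler characteristic count together with a careful analysis of the handle decomposition of the normal form should settle the former, while the latter is controlled by placing the double points in a symmetric collar of the central hyperplane disjoint from the saddle loci.
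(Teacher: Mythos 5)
Your forward direction rests on the claim that a ribbon-clasp normal form is a special case of a symmetric normal form, citing only the abstract's phrase ``simpler version''. That is not true at the level of the definitions, and it is exactly where the work lies. A ribbon-clasp normal form is the realizing surface of $\emptyset \to O \to L \to L_0 \to L \to O \to \emptyset$ in which $L \to L_0$ is a single, unconstrained $1$-surgery, whereas a (symmetric) normal form requires the middle portion to be decomposed as $L \to K \to L_0 \to K \to L$ with $L \to K$ of fusion type onto a link $K$ with exactly $\mu$ components, $K \to L_0$ of fission type onto a link $L_0$ with exactly $\mu+g$ components, and with the two families of bands mutually disjoint. In a ribbon-clasp normal form the middle cross-section need not have $\mu+g$ components and the bands need not split into fusion-then-fission stages. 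So after invoking Theorem~\ref{thm:AB} you still must rearrange the band set $\mathcal{D}$ of the middle $1$-surgery: choose a subset $\mathcal{D}''$ giving a fusion-type surgery $L \to K$ with $K$ having $\mu$ components, put $\mathcal{D}' = \mathcal{D} - \mathcal{D}''$, and then use the band-rearrangement argument of \cite{KSS1} to arrange that $K \to L_0$ is of fission type with $L_0$ having $\mu+g$ components, doing all of this symmetrically in both halves. Your proposal contains no argument for this step, so the forward direction as written is incomplete.

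Your converse is in spirit the paper's argument (undo the saddles, recognize an M-trivial $2$-link, reattach $1$-handles, and apply Theorem~\ref{thm:AA}(4), equivalently Theorem~\ref{thm:AB}), but the points you flag as obstacles are non-issues while the point that actually needs the hypotheses is passed over. If all $1$-surgeries are replaced by identity deformations, the resulting surface is just the trace of the H-trivial link $L$ capped off by disks, so it is automatically a disjoint union of $2$-spheres (trivial spheres and Montesinos twins arising from the Hopf-link components split symmetrically at the top and bottom); no Euler characteristic or genus analysis is required. Moreover, in a normal form the double points lie in the Hopf-splitting levels near the top and bottom of the motion picture, far from the saddles near $\R^3 \times \{0\}$, so your plan to ``place the double points in a symmetric collar of the central hyperplane'' has the geometry backwards, although disjointness of the handles from the double points is automatic in any case. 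The step that does require justification is this: the saddles of the upper half occur at two different levels and are attached to different cross-sections ($L_0$ and $K$), so to regard them all as disjoint $1$-handles attached to the undone sphere-link $F_0$ you must invoke condition (5) of the normal form, namely that the bands of $\mathcal{D}''$ are disjoint from those of $\mathcal{D}'$; this lets you treat $\mathcal{D}' \cup \mathcal{D}''$ as a single family of disjoint bands attached to $L_0$, push all saddles to a common level, and thereby pass from the symmetric normal form to a ribbon-clasp normal form before applying Theorem~\ref{thm:AB}, which is precisely how the paper argues. With that point supplied, your converse goes through.
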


When $F$ is an embedded surface-link, the above theorem is the characterization of a ribbon surface-link due to Kawauchi-Shibuya-Suzuki \cite{KSS2}.  

\section{Preliminaries}\label{sect:singularity}

In this section, we give definitions of a ribbon singularity, a clasp singularity, a $1$-handle surgery, a finger move and an M-trivial $2$-link.  

Let $M$ be a compact $3$-manifold with non-empty boundary and $f:M \rightarrow \mathbb{R}^4$ an immersion of $M$ into $\mathbb{R}^4$. 
Let $\Delta$ be a connected component of  the  multiple point set $\{x \in f(M) \, \mid \, \#f^{-1}(x) \geq 2\}$ $\subset \R^4$. 

We say that $\Delta$ is a {\it ribbon singularity} if $\Delta$ is a $2$-disk in $\mathbb{R}^4$ and the preimage of $\Delta$ is the disjoint union of embedded $2$-disks $\Delta_1$ and $\Delta_2$ in $M$ such that $\Delta_1$ 
 is properly embedded in $M$ and $\Delta_2$  is embedded in the interior of $M$. 
Figure \ref{fig:ribbon1} shows a local model of a ribbon singularity and a motion picture.  

\begin{figure}[h]
 \centering
 \includegraphics{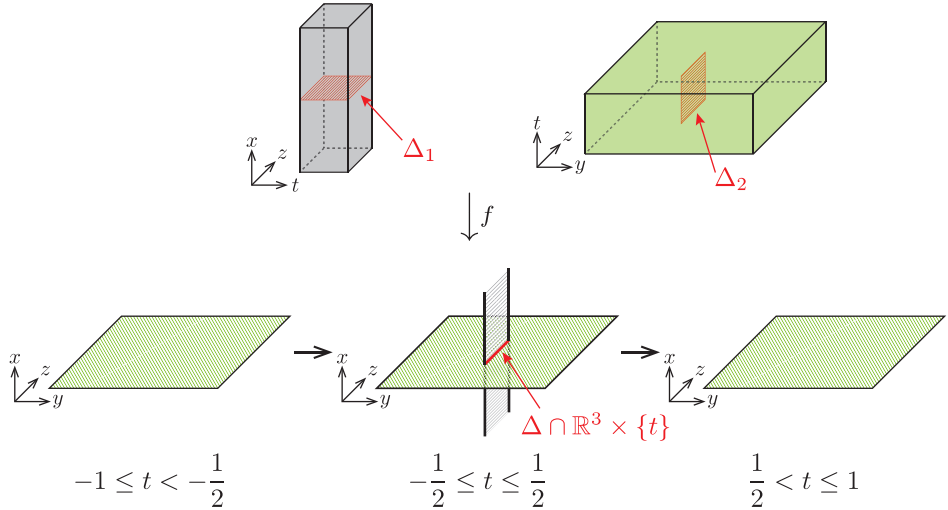}
 \caption{A local model of a ribbon singularity}
 \label{fig:ribbon1}
\end{figure}

We say that $\Delta$ is a {\it clasp singularity} if $\Delta$ is a $2$-disk in $\mathbb{R}^4$ and the preimage of $\Delta$ is the disjoint union of embedded $2$-disks $\Delta_1$ and $\Delta_2$ in $M$ such that for each $ i \in \{1,2\}$,  $\partial\Delta_i$ is the union of two arcs $\alpha_i$ and $\beta_i$, where $\alpha_i$ is a properly embedded arc in $M$ and $\beta_i$ is a simple arc in $\partial M$ which connects endpoints of $\alpha_i$.
Figure \ref{fig:clasp1} shows a local model of a clasp singularity and a motion picture. 

\begin{figure}[h]
 \centering
 \includegraphics{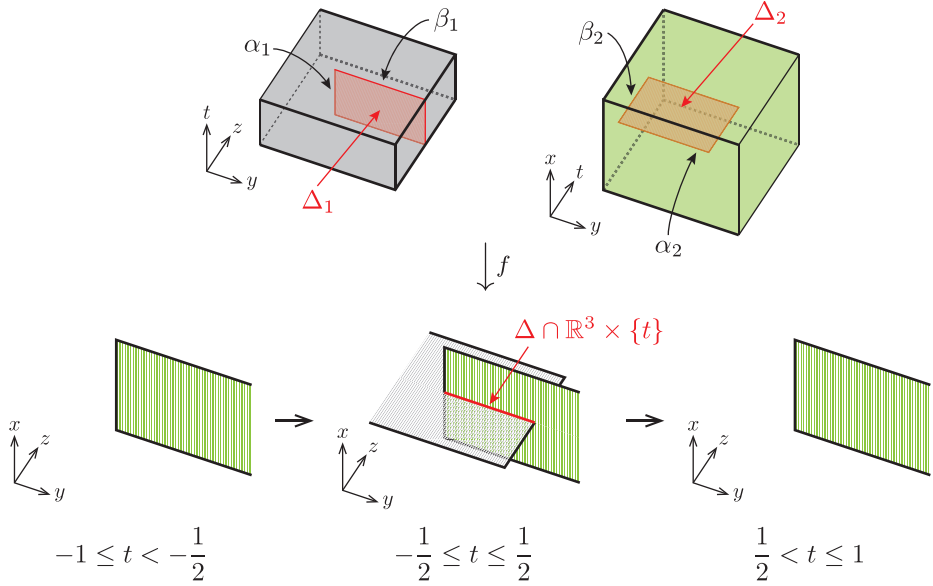}
 \caption{A local model of a clasp singularity}
 \label{fig:clasp1}
\end{figure}

\vspace{0.3cm} 

Let $F$ be a surface-link. 
A {\it chord} attaching to $F$ means an unoriented simple arc $\gamma$ in $\R^4$ 
such that $F \cap \gamma= \partial \gamma$, which  misses the double points of $F$.  
Two chords are {\it equivalent} or {\it homotopic} with respect to $F$ if they are homotopic through chords attaching to $F$.

A {\it $1$-handle} attaching to $F$ means an embedded $3$-disk $B$ in $\R^4$ 
such that $F \cap B$ is the union of a pair of mutually disjoint $2$-disks in $\partial B$, $F \cap B$ misses the double points of $F$, 
and the orientation of $F \cap B$ induced from $\partial B$ is opposite to the orientation induced from $F$.  
Put 
$${\rm h}(F; B):= 
{\rm Cl}(F  \cup \partial B - F \cap B),$$ which we call the  surface-link 
obtained from $F$ by a {\it $1$-handle surgery} along $B$. 
(Here ${\rm Cl}$ means the closure.) 
Two $1$-handles attaching to $F$ are said to be {\it equivalent} if they are ambient isotopic by an isotopy of $\R^4$ keeping $F$ setwise fixed.  
It is known that 
$1$-handles attaching to $F$ are equivalent if and only if their cores are equivalent as chords attaching to $F$.  (Boyle \cite{Bo} and Hosokawa-Kawauchi \cite{HK}).  
For a chord $\gamma$ attaching to $F$, we  denote by ${\rm h}(F; \gamma)$ the surface-link obtained from $F$ by a $1$-handle surgery along a $1$-handle whose core is $\gamma$.  

A finger move is  the inverse operation of the Whitney trick (cf. \cite{C, Kirb}).  
Here we use an alternative definition using a Montesinos twin and $1$-handle surgeries.  A {\it Montesinos twin} is a surface-link   which is the boundary of 
a pair of embedded oriented $3$-disks $B^3_1$ and $B^3_2$ with a single ribbon singularity between $B^3_1$ and $B^3_2$.   It has a positive double point and a negative double point.  The equivalence class, as a surface-link,  of a Montesinos twin is unique.  We denote by $T$ a Montesinos twin.  

Let $F$ be a surface-link.  Let $U$ be a $4$-disk in $\R^4$ disjoint from $F$.  Put a Montesinos twin $T= S_1 \cup S_2$ in $U$ and let $B^3_1$ and $B^3_2$ be embedded $3$-disks in $U$ with a clasp singularity with $S_i = \partial B^3_i$ $(i \in \{1,2\})$.  
Take a point $p_1\in S_1 - S_1\cap S_2$, a point $p_2 \in S_2 - S_1 \cap S_2$ and two distinct points $q_1, q_2$ of $F$ missing the double points of $F$.  Let $\gamma_1$ and $\gamma_2$ be oriented chords attaching to $F \cup M$ such that 
for $i \in \{1,2\}$,  
$\gamma_i$ starts from $p_i$ and terminates at $q_i$ and 
$\gamma_i \cap (B^3_1 \cup B^3_2)  = \{p_i\}$.   
Let $F'$ be the surface-link obtained from $F \cup T$ by $1$-handle surgeries along two $1$-handles whose cores are $\gamma_1$ and $\gamma_2$.  
Let $\gamma$ be a chord attaching to $F$ which is a concatenation of $\gamma_1^{-1}$, a simple arc from $p_1$ to $p_2$ in $U$ and $\gamma_2$.  
We say that $F'$ is obtained from $F$ by a {\it finger move} along $\gamma$, which we denote by ${\rm f}(F; \gamma)$.   
It is seen that if $\gamma$ and $\gamma'$ are equivalent chords attaching to $F$ then ${\rm f}(F; \gamma)$ is equivalent to ${\rm f}(F; \gamma')$.  

\begin{dfn}{\rm 
An {\it M-trivial $2$-link} is a surface-link which is a split union of a trivial $2$-link and some (or no) Montesinos twins.  
}\end{dfn} 

\section{Proof of Theorem~\ref{thm:AA}}\label{sect:proofAA}

\begin{proof}[Proof of Theorem~\ref{thm:AA}.]
Suppose (1).  
Let $F$ be a ribbon-clasp surface-link.  Let $f : M \to \R^4$ be an immersion of handlebodies $M$ with $F = f(\partial M)$ whose multiple point set is a union of ribbon singularities and clasp singularities. Let $\Delta_1, \dots, \Delta_c$ be the clasp singularities, and for each $i \in \{1, \dots, c\}$, let $\Delta_{i1} $ and $\Delta_{i2}$ be the $2$-disks in $M$ with $f^{-1}(\Delta_i) = 
\Delta_{i1} \cup \Delta_{i2}$.  Let $N_{ij}$ $(i \in \{1,\dots, c\}, j \in \{1,2\})$ be a regular neighborhood of $\Delta_{ij}$ in $M$ and let $D_{ij}$ be the  
properly embedded $2$-disk in $M$ with $D_{ij} = {\rm Cl}(\partial N_{ij} \cap {\rm Int} M)$.  Let $D_{ij} \times [0,1]$ be a regular neighborhood of $D_{ij}$ in 
${\rm Cl}(M - N_{ij})$. Put $M_0 = {\rm Cl}(M - \cup_{i,j} (N_{ij} \cup D_{ij}\times [0,1])$ where $i$ and $j$ run over all $i \in \{1, \dots, c\}$ and $j \in \{1,2\}$.  Let $f_0: M_0 \to \R^4$ be the restriction of $f$ to $M_0$. Since $M_0$ is a $3$-manifold homeomorphic to $M$, which is a union of handlebodies.  The multiple point set of the immersion $f_0$ is 
a union of ribbon singularities, and $F_0:= f_0(\partial M_0)$ is a ribbon surface-link.  
For each $i \in \{1, \dots, c\}$, let $T_i := f( \partial N_{i1} \cup \partial N_{i2})$, which is a Montesinos twin.  The image $f(D_{ij}\times [0,1])$ is a $1$-handle attaching to $F_0 \cup 
(\cup_i T_i)$, and 
we see that $F$ is obtained from 
$F_0$ by finger moves using 
$T_1, \dots, T_c$ and $1$-handles $f(D_{ij}\times [0,1])$.  Thus we have (2).  

Suppose (2).  
Let $F$ be obtained from a ribbon surface-link $F_0$ by finger moves.  
The ribbon surface-link $F_0$ is obtained from a trivial $2$-link, say $F_{00}$, by $1$-handle surgeries.  More precisely, there are mutually disjoint $1$-handles, say $B_1, \dots, B_n$, attaching to $F_{00}$ and the surgery result is $F_0$. 
If necessary modifying the finger moves, we may assume that the chords attaching to $F_0$ along which the finger moves are applied are disjoint from the $1$-handles $B_1, \dots, B_n$.  Then we can apply the $1$-handle surgeries along $B_1, \dots, B_n$ and the finger moves simultaneously to $F_{00}$.  Thus we have (3).   Since $F_{00} \cup (\cup_i T_i)$ is an $M$-trivial $2$-link, we also have (4).  

Suppose (3).  
Let $F$ be obtained from a trivial $2$-link by $1$-handle surgeries and finger moves.  Without loss of generality, we may assume that $F$ is obtained from a trivial $2$-link by $1$-handle surgeries first and then by finger moves, namely, $F$ is obtained from a ribbon surface-link $F_0$ by finger moves.  Thus we have (2).  

Suppose (4).  
Let $F$ be obtained from an M-trivial $2$-link $F'$ by $1$-handle surgeries.  
There is an immersion $f': M' \to \R^4$ with $f'(\partial M')= F'$ such that $M'$ is a disjoint union of $3$-disks and  
the multiple point set of $f'$ is a union of clasp singularities.  
Modifying the cores of the $1$-handles attaching to $F'$ used to obtain $F$, we assume that 
the cores intersect with $f'(M')$ in general position.  Then we have an immersion $f: M \to \R^4$ with $f(\partial M)=F$ 
which is an extension of $f'$ 
such that $M$ is a disjoint union of handlebodies and the multiple point set of $f$ is a union of clasp singularities and ribbon singularities.  
Thus we have (1).  
This completes the proof of Theorem~\ref{thm:AA}.  
\end{proof}

\section{Normal Forms}\label{sect:normalform}

First we prepare some terminologies.  We allow a link to be empty, which we call the  empty link and denote it by $\emptyset$. 

Let $L$ be a link and let $\mathcal{D} =\{D_1, \dots, D_m\}$  be a set of 
mutually disjoint $m$ $(\geq 1)$ oriented $2$-disks  in $\R^3$ with $L \cap D_i = \emptyset$ for $i \in \{1, \dots, m\}$.  We say that the link $L \cup ( \cup_i  \partial D_i)$ is obtained from $L$ by a {\it $0$-surgery} along $\mathcal{D}$.  We also say that $L$ is obtained from $L \cup ( \cup_i  \partial D_i)$ by a {\it $2$-surgery}.  

Let $L$ be a link and let $\mathcal{D} =\{D_1, \dots, D_m\}$ be a set of 
mutually disjoint oriented $2$-disks  in $\R^3$ such that for $i \in \{1, \dots, m\}$, 
$D_i$ is a band attaching to $L$. 
We say that the link 
$${\rm h}(L; \mathcal{D}):= 
{\rm Cl}(L \cup (\cup_i \partial D_i) -  L \cap (\cup_i \partial D_i))$$  
 is obtained from $L$ by a {\it $1$-surgery}.  
A $1$-surgery is also called {\it surgery} or {\it band surgeries}.  
(A {\it band} attaching to $L$ is an oriented $2$-disk $D$ in $\R^3$ such that $L \cap D$ is the union of a pair of mutually disjoint arcs in $\partial D$ and the orientation of $L \cap \partial D$ induced from $\partial D$ is opposite to the orientation induced from $L$.)

Let $L$ be a link and let $(h_s \mid s \in [0,1])$ be an ambient isotopy of $\R^3$, i.e., 
each $h_s: \R^3 \to \R^3$ is a homeomorphism and $h_0 = {\rm id}$.  When $L'=h_1(L)$, we say that $L'$ is obtained from $L$ by an {\it isotopic deformation}.  When each $h_s$ is the identity map, we call the isotopic deformation the {\it identity deformation}. 

Let $L$ be a link and let $L'$ be obtained from $L$ by applying some crossing changes. 
There is a homotopy $(g_s:  M^1 \to \R^3 \mid s \in [0,1])$ of the source circles $M^1$ of the link into $\R^3$ with 
$g_0(M^1) = L$ and $g_1(M^1) = L'$ such that each $g_s$, except $s= 1/2$, is an embedding of $M^1$ 
and 
at $s=1/2$ intersections occur.  We call such a homotopy a {\it crossing  change deformation}.  It is  called 
a {\it simple crossing change deformation} if exactly one crossing change occurs.   

A {\it link transformation} from $L$ to $L'$, denoted by $L \to L'$, means one of the following transformation changing   $L$ to $L'$: 
\begin{itemize}

\item[(1)] an isotopic deformation by an ambient isotopy 
$(h_s \mid s \in [0,1])$ of $\R^3$,    

\item[(2)] a $k$-surgery along $\mathcal{D}$ for some $k \in \{0,1,2\}$, 

\item[(3)] a crossing change deformation by a homotopy $(g_s \mid s \in [0,1])$. 
\end{itemize}
For an interval $[a,b]$ $(a < b)$, the {\it realizing surface} of $L \to L'$ in $\R^3 \times [a,b]$ is a compact oriented surface, say $F$, such that 
\begin{itemize}

\item[(1)] for an isotopic deformation by an ambient isotopy 
$(h_s \mid s \in [0,1])$ of $\R^3$, 
$F \cap \R^3 \times \{t\} = h_s(L) \times \{t\}$ for $t \in [a,b]$, where $s = (t-a)/(b-a)$,  

\item[(2)] for a $k$-surgery along $\mathcal{D} =\{D_1, \dots, D_m\}$ for some $k \in \{0,1,2\}$, 
$$F \cap \R^3 \times \{t\} = 
\left\{
\begin{array}{llc}
{\rm h}(L; \mathcal{D}) \times \{t\} & \mbox{for } m \in ( (a+b)/2, b]  &   \\
L \cup  ( \cup_i  D_i) \times \{t\} & \mbox{for } m= (a+b)/2  &   \\
L \times \{t\} & \mbox{for } m \in [a, (a+b)/2), &   
\end{array}
\right.
$$

\item[(3)] for a crossing change deformation by a homotopy $(g_s \mid s \in [0,1])$, 
$F \cap \R^3 \times \{t\} = g_s(L) \times \{t\}$ for $t \in [a,b]$, where $s = (t-a)/(b-a)$.  
\end{itemize}
We denote the realizing surface by $F(L\to L')_{[a,b]}$.  

A {\it link transformation sequence} is a sequence of link transformations 
$\mathcal{L}: L_1 \to L_2 \to \dots \to L_m$.  
A {\it division} of an interval $[a,b]$ of length $m$ is a 
sequence of real numbers 
$t_1, t_2, \dots, t_m$ with  $a=t_1 < t_2 < \dots < t_m=b$.   
The {\it realizing surface} of a link transformation sequence $\mathcal{L} : L_1 \to L_2 \to \dots \to L_m$ 
in $\R^3 \times [a,b]$ with a division $t_1, t_2, \dots, t_m$ is 
the union of the realizing surfaces $F(L_i \to L_{i+1})_{[t_i, t_{i+1}]}$ for $i=1, \dots, m-1$.  It is a surface in $\R^3 \times [a,b]$, which we denote by $F(\mathcal{L})_{[t_1, \dots, t_m]}$ or $F(\mathcal{L})_{[a,b]}$.  

\vspace{0.3cm} 
We call a link $L$ an {\it H-trivial link} with $m$ Hopf links if it is a split union of a trivial link and $m$ Hopf links for some $m \geq 0$.  

We say that a link transformation $L \to L'$ is  a 
{\it Hopf-splitting deformation} when $L \to L'$ is a crossing change deformation  such that each crossing change occurs for a Hopf  link to change it into a trivial $2$-component link.  A {\it simple Hopf-splitting deformation} is a Hopf-splitting deformation that changes exactly one Hopf  link to a trivial $2$-component link.  

Note that when $L \to L'$ is  a 
{\it Hopf-splitting deformation} from 
an H-trivial link $L$  with $m$ Hopf links to  a trivial link $L'$, 
 the number of crossing changes is $m$.   

Let $L \to L'$ be a link transformation that is a $1$-surgery along $\mathcal{D} =\{D_1, \dots, D_m\}$.  When $\# L' = \# L -m$, we call it a $1$-surgery of {\it fusion type}. 
When $\# L' = \# L +m$, we call it a $1$-surgery of {\it fission type}. 

\vspace{0.3cm}  

\begin{dfn}\label{dfn:ribbonclaspnormalform}{\rm 
A surface-link $F$ is in a {\it ribbon-clasp normal form} if it is the realizing surface of a link transformation sequence 
$$ \mathcal{L}: 
\emptyset \to O \to L \to L_0 \to L \to O \to \emptyset $$
in $\R^3 \times [-3,3]$ with a division $-3, -2, -1, 0, 1, 2, 3$ 
satisfying the following conditions. 
\begin{itemize} 
\item[(1)] 
$O$ is a trivial link and 
$O \to \emptyset$ is a $2$-surgery.  
\item[(2)] 
$L$ is an H-trivial link and 
$L \to O$ is a Hopf-splitting deformation. 
\item[(3)] 
$L_0$ is a link and 
$L_0 \to L$ is a $1$-surgery. 
\item[(4)]  
The realizing surface is symmetric with respect to $\R^3 \times \{0\}$.  
\end{itemize}
}\end{dfn} 

\begin{exa}{\rm 
Figure~\ref{fig:fg_st2sumB2} shows a surface-knot in a ribbon-clasp normal form. 
}\end{exa}

\begin{figure}[h]
 \centering
 \includegraphics[width=12.5cm]{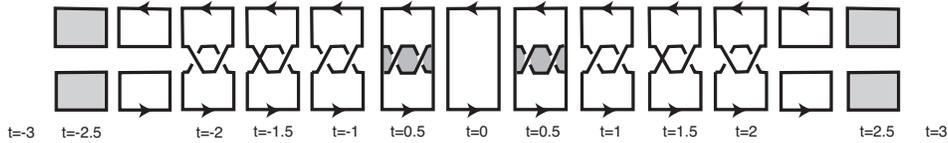}
 \caption{A surface-link in a ribbon-clasp normal form}
 \label{fig:fg_st2sumB2}
\end{figure}

The following is the definition of a {\it normal form} for a (singular) surface-link.  

\begin{dfn}{\rm 
Let $F$ be a surface-link with $\mu$ components and $g$ total genus.  We say that $F$ is in a {\it normal form} if it is the realizing surface of a link transformation sequence 
$$ \mathcal{L}: 
\emptyset \to O_- \to L_- \to K_- \to L_0 \to K_+ \to L_+ \to O_+ \to \emptyset $$
in $\R^3 \times [-4,4]$ with a division $-4, -3, \cdots, 3, 4$ 
satisfying the following conditions. 
\begin{itemize} 
\item[(1)] $O_-$ and $O_+$ are trivial links, 
$\emptyset \to O_-$ is a $0$-surgery and 
$O_+ \to \emptyset$ is a $2$-surgery.  
\item[(2)] 
$L_-$ and $L_+$ are H-trivial links, 
$O_- \to L_-$ is the inverse of a Hopf-splitting deformation, and 
$L_+ \to O_+$ is a Hopf-splitting deformation. 
\item[(3)] 
$K_-$ and $K_+$ are links with $\mu$ components, 
$L_- \to K_-$ is a $1$-surgery of fusion type and 
$K_+ \to L_+$ is a $1$-surgery of fission type. 
\item[(4)] 
$L_0$ is a link with $\mu + g$ components, 
$K_- \to L_0$ is a $1$-surgery of fission type and 
$L_0 \to K_+$ is a $1$-surgery of fusion type.  
\item[(5)] Let $\mathcal{D}^{--}$ ($\mathcal{D}^{-}$, $\mathcal{D}^{+}$ or $\mathcal{D}^{++}$, resp.) be the set of bands used for the $1$-surgery 
$L_- \to K_-$ ($K_- \to L_0$, $L_0 \to K_+$ or $K_+ \to L_+$, resp.). 
Then any band in $\mathcal{D}^{--}$ is disjoint from any band in  $\mathcal{D}^{-}$, and any band in $\mathcal{D}^{++}$ is disjoint from any band in  $\mathcal{D}^{+}$. 
\end{itemize}
}\end{dfn} 

Let $F$ be in a normal form.  Let $c'_+$, $c'_-$, $c''_+$  and $c''_-$ be non-negative integers such that $F$ has $c'_+$ positive double points and $c'_-$ negative double points in $\R^3 \times [0, \infty)$ and $c''_+$ positive double points and $c''_-$ negative double points in $\R^3 \times (-\infty, 0]$.  Then $c'_+$ is the number of negative Hopf links in $L_+$,  $c'_-$ is the number of positive Hopf links in $L_+$, $c''_+$ is the number of positive Hopf links in $L_-$  and $c''_-$ is the number of negative Hopf links in $L_-$.  

\begin{exa}{\rm 
The surface-knot illustrated in Figure~\ref{fig:fg_st2knot_negA2} is an immersed $2$-sphere called a  
{\it negative standard singular $2$-knot}, which we denote by $S_-$.  It has a negative double point. 

\begin{figure}[h]
 \centering
 \includegraphics[width=8cm]{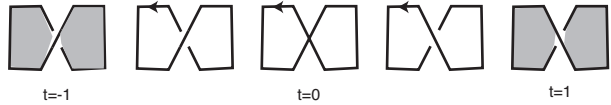}
 \caption{A negative standard singular $2$-knot}
 \label{fig:fg_st2knot_negA2}
\end{figure}

(1) When we take $c'_-=1$ and $c'_+ = c''_+ = c''_-=0$, we have a surface-knot  in a normal form illustrated 
in Figure~\ref{fig:fg_st2knot_negB2} which is equivalent to $S_-$.  
It is a realizing surface of  a link transformation sequence 
$$ \mathcal{L}: 
\emptyset \to O_- \to L_- \to K_- \to L_0 \to K_+ \to L_+ \to O_+ \to \emptyset $$
such that $O_- = L_- = K_- = L_0 = K_+$ is a trivial knot, $L_+$ is a positive Hopf link, and $O_+$ is a trivial $2$-component link.  

\begin{figure}[h]
 \centering
 \includegraphics[width=11cm]{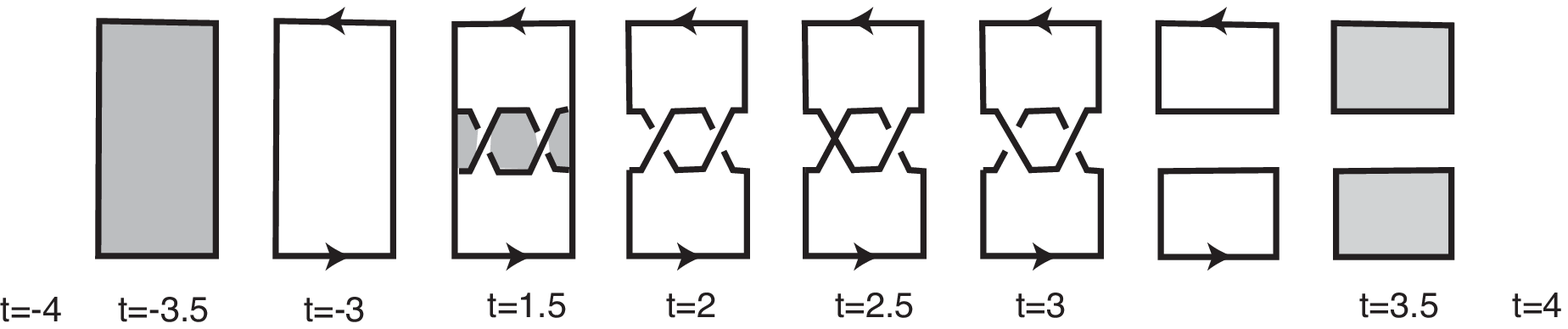}
 \caption{A negative standard singular $2$-knot in a normal form}
 \label{fig:fg_st2knot_negB2}
\end{figure}

(2) When we take $c''_-=1$ and $c'_+ = c'_- = c''_+=0$, we have a surface-knot  in a normal form illustrated 
in Figure~\ref{fig:fg_st2knot_negC2} which is equivalent to $S_-$.  
It is a realizing surface of  a link transformation sequence 
$$ \mathcal{L}: 
\emptyset \to O_- \to L_- \to K_- \to L_0 \to K_+ \to L_+ \to O_+ \to \emptyset $$
such that $O_-$ is a trivial $2$-component link,  
$L_-$ is a negative Hopf link, and $K_-= L_0 = K_+ = L_+ = O_+$ is a trivial knot.  

\begin{figure}[h]
 \centering
 \includegraphics[width=11cm]{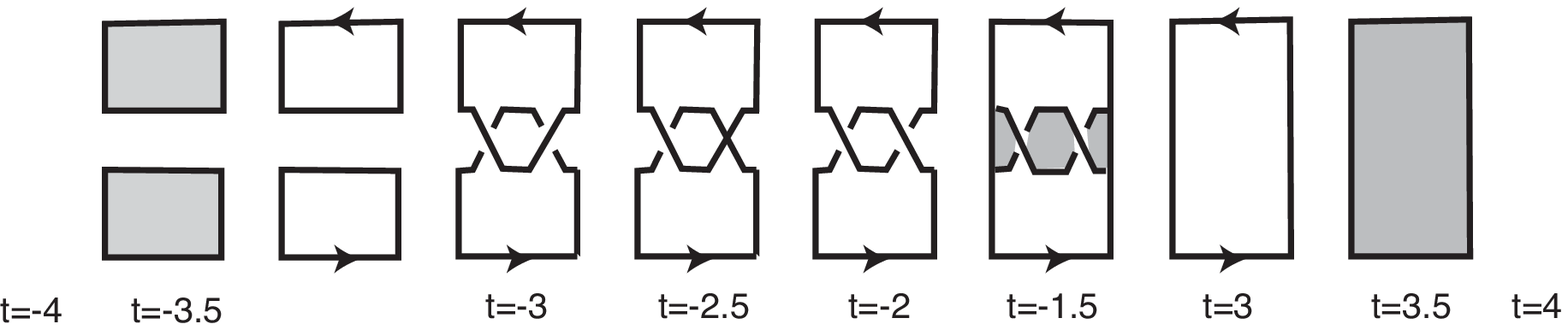}
 \caption{A negative standard singular $2$-knot in another normal form}
 \label{fig:fg_st2knot_negC2}
\end{figure}
}\end{exa}

\begin{exa}{\rm 
Figure~\ref{fig:fg_MT2} shows a Montesinos twin in a symmetric normal form, 
which is a realizing surface of  a link transformation sequence 
$$ \mathcal{L}: 
\emptyset \to O \to L \to K \to L_0 \to K \to L \to O \to \emptyset $$
such that $O$ is a trivial $2$-component link and $L= K= L_0$ is a positive Hopf link.   

\begin{figure}[h]
 \centering
 \includegraphics[width=12.5cm]{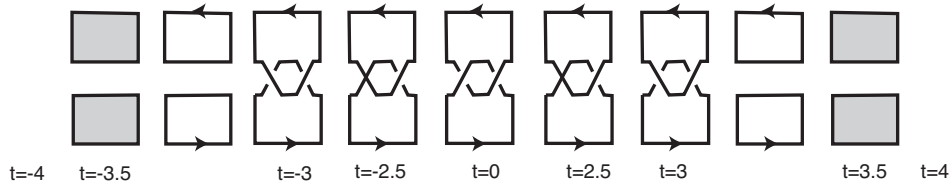}
 \caption{A Montesinos twin in a normal form}
 \label{fig:fg_MT2}
\end{figure}

The surface-link in a symmetric normal form in Figure~\ref{fig:fg_MT2} 
has $c'_-= c''_+=1$ and $c'_+ = c''_-=0$.  
Since a Montesinos twin is equivalent to its mirror image, by considering the mirror image in each cross section in the motion picture of Figure~\ref{fig:fg_MT2},  we have a Montesinos twin in a symmetric normal form with  $c'_-= c''_+=0$ and $c'_+ = c''_-=1$.
}\end{exa}

\section{Proof of Theorems~\ref{thm:AB} and \ref{thm:C}} \label{sect:proofAB}

\begin{proof}[Proof of Theorem~\ref{thm:AB}.]
We show that a ribbon-clasp surface-link $F$  is equivalent to a surface-link in a ribbon-clasp normal form.  
By Theorem~\ref{thm:AA}, $F$ is obtained from an M-trivial $2$-link 
by $1$-handle surgeries.   The M-trivial $2$-link is equivalent to a 
surface-link $F'$ in a ribbon-clasp normal form which is the 
realizing surface of a link transformation sequence 
$$\mathcal{L}': \emptyset \to O' \to L' \to L' \to L'  \to O' \to \emptyset$$ 
 in $\R^3 \times [-3,3]$ with division $-3, -2, -1, 0, 1, 2, 3$
such that $L' \to L' \to L'$ consists of the identity deformations.  
Thus, $F$ is equivalent to a surface-link obtained from $F'$ by $1$-handle surgeries.  Deforming the cores of the $1$-handles up to equivalence, we may assume that 
the cores, say $\gamma_1, \dots, \gamma_m$, are in $\R^3 \times \{0\}$.  
Let $D_1, \dots, D_m$ be bands attaching to $L'$ in $\R^3$ 
whose cores are $\tilde \gamma_1, \dots, \tilde \gamma_m$, where 
$\tilde \gamma_i$ $(i \in \{1, \dots, m\})$ is an arc in $\R^3$ with 
$\tilde \gamma_i \times \{0\} = \gamma_i$.   Put $B_i:= D_i \times [-1/2, 1/2]$, and we have $1$-handles $B_1, \dots, B_m$ attaching to $F'$.  Since  the $1$-handles are equivalent to the original $1$-handles attaching to $F'$, the surface-link $F$ is equivalent to 
the surface-link obtained from $F'$ by $1$-handle surgeries along $B_1, \dots, B_m$, which is in a ribbon-clasp normal form.  

We show that a surface-link $F$ in a ribbon-clasp normal form is a ribbon-clasp surface-link.  Suppose that $F$ is the realizing surface of a link transformation sequence 
$$\mathcal{L}: \emptyset \to O \to L  \to L_0  \to L  \to O \to \emptyset$$ 
 in $\R^3 \times [-3,3]$ with division $-3, -2, -1, 0, 1, 2, 3$ 
 satisfying the conditions of Definition~\ref{dfn:ribbonclaspnormalform}. 
 Let $F'$ be the realizing surface of the link transformation sequence 
$$\mathcal{L}': \emptyset \to O \to L  \to L  \to L  \to O \to \emptyset$$ 
 in $\R^3 \times [-3,3]$ with division $-3, -2, -1, 0, 1, 2, 3$ 
which is obtained from $\mathcal{L}$ by replacing $L  \to L_0  \to L$ 
with the identity deformations $L  \to L  \to L$.  Then $F'$ is an M-trivial $2$-link.   
Note that $F$ is obtained from $F'$ by $1$-handles surgeries.  
(Let $\mathcal{D}= \{D_1, \dots, D_m\}$ be the set of bands attaching to $L$ along which  the $1$-surgery $L \to L_0$ is applied.  Then $D_1 \times  [-1/2, 1/2], \dots, D_m \times [-1/2, 1/2]$ are $1$-handles attaching to $F'$, and $F$ is obtained from $F'$ by surgery along these $1$-handles.)  
By Theorem~\ref{thm:AA}, $F$ is a ribbon-clasp surface-link. 
\end{proof}

\vspace{0.3cm} 

\begin{proof}[Proof of Theorem~\ref{thm:C}.]
First we show that a ribbon-clasp surface-link is equivalent to a surface-link in a symmetric normal form.  Let $F$ be a ribbon-clasp surface-link.  By Theorem~\ref{thm:AB}, we may assume that $F$ is in a ribbon-clasp normal form such that
  it is the realizing surface of a link transformation sequence 
$$\mathcal{L}: \emptyset \to O \to L \to L \to L_0  \to L \to L \to O \to \emptyset$$ 
 in $\R^3 \times [-4,4]$ with division $-4, -3, -2, -1, 0, 1, 2, 3, 4$ 
 satisfying the conditions of Definition~\ref{dfn:ribbonclaspnormalform} with  
the identity deformation  $L\to L$.  
Let $\mathcal{D}$ be the set of bands attaching to $L \to L_0$ along which the $1$-surgery $L \to L_0$ is applied.  Let $F = F_1 \cup \dots \cup  F_\mu$, where $F_i$ is the $i$th component of $F$. The link $L$ is divided into $\mu$ sub-links $L_1, \dots, L_\mu$ such that $L_i$ is the cross-section of $F_i$ at $t=-2$.  Choose a subset $\mathcal{D}''$ of $\mathcal{D}$ such that when we put $K:= {\rm h}(L; \mathcal{D}'')$, 
the $1$-surgery $L \to K$ along $\mathcal{D}''$ is a $1$-surgery of fusion type and the number of components of $K$ is $\mu$.  Let $\mathcal{D}' = \mathcal{D} - \mathcal{D}''$.  
Let 
$$\mathcal{L}': \emptyset \to O \to L \to K \to L_0  \to K \to L \to O \to \emptyset$$ 
be the link transformation sequence obtained from $\mathcal{L}$ by replacing 
$L \to L \to L_0 \to L \to L$ by  $L \to K \to L_0  \to K \to L$ where 
$L \to K$ is the $1$-surgery along $\mathcal{D}''$ and 
$K \to L_0$ is the $1$-surgery along $\mathcal{D}'$.  
By an argument in \cite{KSS1}, we may assume that $K \to L_0$ is a $1$-surgery of fission type and the number of components of $L_0$ is $\mu + g$. 
Then $F$ is equivalent to the realizing surface of $\mathcal{D}''$  
 in $\R^3 \times [-4,4]$ with division $-4, -3, -2, -1, 0, 1, 2, 3, 4$, which is in a symmetric normal form.  
 
We show that  a surface-link in a symmetric normal form is a ribbon-clasp surface-link.  Let $F$ be a surface-link, with $\mu$ components and $g$ total genus, in a symmetric normal form, i.e., it is the realizing surface of a link transformation sequence  
$$\mathcal{L}: \emptyset \to O \to L \to K \to L_0  \to K \to L \to O \to \emptyset$$ 
in $\R^3 \times [-4,4]$ with division $-4, -3, -2, -1, 0, 1, 2, 3, 4$ satisfying the following conditions. 
\begin{itemize} 
\item[(1)] $O$ is a trivial link and $O \to \emptyset$ is a $2$-surgery. 
\item[(2)] $L$ is an H-trivial link and $L \to O$ is a Hopf-splitting deformation. 
\item[(3)] $K$ is a link with $\mu$ components and  $K \to L$ is a $1$-surgery of fission type. 
\item[(4)] $L_0$ is a link with $\mu + g$ components and $L_0 \to K$ is a $1$-surgery of fusion type. 
\item[(5)] Let $\mathcal{D}'$ (or $\mathcal{D}''$) be the set of bands along which the $1$-surgery $L_0 \to K$ (or $K \to L$) is applied. Then any band of $\mathcal{D}''$ is disjoint from any band of $\mathcal{D}'$. 
\item[(6)] The realizing surface is symmetric with respect to $\R^3 \times \{0\}$. 
\end{itemize} 
Put $\mathcal{D}:= \mathcal{D}' \cup \mathcal{D}''$, which is a set of bands attaching to $L_0$ such that the surgery result is $L$.  
Let 
$$\mathcal{L}': \emptyset \to O \to L \to L \to L_0  \to L \to L \to O \to \emptyset$$ 
be the sequence obtained from $\mathcal{L}$ by replacing 
$L \to K \to L_0  \to K \to L$ with $L \to L \to L_0  \to L \to L$ where 
$L\to L$ is the identity deformation, $L_0 \to L$ is the $1$-surgery along $\mathcal{D}$ and $L \to L_0$ is its reverse.  Then $F$ is equivalent to the realizing surface of $\mathcal{L}'$ in $\R^3 \times [-4,4]$ with division $-4, -3, -2, -1, 0, 1, 2, 3, 4$.  Finally, the surface-link is equivalent to a surface-link in a ribbon-clasp normal form.  
\end{proof}

\section{Proof of Theorem~\ref{thm:B}}

First we prepare lemmas.  

Let $L \to L'$ be a simple crossing change deformation and let 
$F(L \to L')_{[a,b]}$ be the  realizing surface in $\R^3 \times [a,b]$.   
Let $W$ be a small $3$-disk in $\R^3$ such that $W \times [a,b] \cap  F(L \to L')_{[a,b]} = \emptyset$. 

\begin{lem} \label{lem:doubleHopfA} 
In the above situation, there is a link transformation sequence 
$\mathcal{L}: L=L_0 \to L_1 \to L_2 \to L_3 \to L_4= L'$ satisfying the following. 
\begin{itemize} 
\item[(1)] $F(L \to L')_{[a,b]}$ is ambient isotopic to 
$F(\mathcal{L})_{[a,b]}$ by an isotopy of $\R^3 \times [a,b]$ rel 
$\R^3 \times \{a, b\}$.  
\item[(2)] $L \to L_1$ is a $1$-surgery along two bands, and $L_1$ is the split union of a link $\tilde{L'}$ and a Hopf link, where $\tilde{L'}$ is a link equivalent to $L'$. 
\item[(3)] $L_1 \to L_2$ is an isotopic deformation moving $\tilde{L'}$ onto $L'$  and moving the Hopf link into $W$ so that $L_2$ is the split union of $L'$ and a Hopf link in $W$, 
\item[(4)] $L_2 \to L_3$ is a simple Hopf-splitting deformation that changes the Hopf link in $W$ to a trivial $2$-component link in $W$, 
\item[(5)] $L_3 \to L_4$ is a $2$-surgery that eliminates the trivial $2$-component link  in $W$. 
\end{itemize}
\end{lem}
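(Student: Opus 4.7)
The argument is local: I would modify $F(L\to L')_{[a,b]}$ only inside a small 4-ball neighborhood of its unique double point. Let $p$ be the crossing point in $\R^3$ at which the crossing change occurs, and choose a small 3-ball $B\subset\R^3$ around $p$ so small that $B\cap W=\emptyset$ and $g_s(L)\cap B$ is a pair of disjoint arcs for $s\ne 1/2$ and two arcs meeting transversely at $p$ for $s=1/2$. Inside $B\times[a,b]$, the realizing surface is a pair of 2-disks $D_+\cup D_-$ meeting transversely at $(p,(a+b)/2)$; outside $B\times[a,b]$ the surface is a product over strands in which $L$ and $L'$ coincide.

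The next step is to build the link transformation sequence $\mathcal{L}$ by carrying out all four moves inside $(B\cup W)\times[a,b]$. At a moment $t_1$ slightly before the crossing time, I attach in $B$ two small bands $\beta_1,\beta_2$ to the two strands, chosen so that their $1$-surgery simultaneously resolves the crossing to its post-crossing-change form and leaves behind a small unknotted Hopf link inside $B$. A standard choice is a pair of parallel bands joining the two strands transversely to the crossing, so that after surgery the two strands are ``doubled off'' into a pair of arcs realizing $\tilde{L'}$ together with a small Hopf link $H$ that geometrically records the original over/under information. This yields the $1$-surgery $L\to L_1=\tilde{L'}\sqcup H$. An isotopy $L_1\to L_2$ then glides $\tilde{L'}$ onto $L'$ and pushes $H$ into $W$; a simple Hopf-splitting deformation $L_2\to L_3$ inside $W$ turns $H$ into a trivial $2$-component link using exactly one crossing change, which provides the unique double point of $F(\mathcal{L})_{[a,b]}$; finally a $2$-surgery $L_3\to L_4=L'$ caps these two unknots by a pair of disjoint disks in $W$.

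What remains is to verify the ambient isotopy claim (1). Both realizing surfaces agree outside $(B\cup W)\times[a,b]$ and on $\R^3\times\{a,b\}$, so it suffices to isotope their restrictions to $(B\cup W)\times[a,b]$ rel boundary. Both restrictions are pairs of immersed $2$-disks carrying exactly one transverse double point: in $F(\mathcal{L})_{[a,b]}$ the two saddles introduced by the $1$-surgery along $\beta_1,\beta_2$ are cancelled in Euler-characteristic terms by the two caps from the $2$-surgery, and the only double point has simply been relocated from $B\times\{(a+b)/2\}$ to a point inside $W\times[a,b]$. The required isotopy drags this double point along the ``tube'' created by the bands, the intervening isotopy, and the capping disks, until it is repositioned in $W$. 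The main obstacle I expect is precisely this step: choosing $\beta_1,\beta_2$ carefully enough that the Hopf link produced is genuinely a standard Hopf link split from $\tilde{L'}$, and implementing the dragging isotopy explicitly --- a four-dimensional manipulation that is essentially the motion-picture analogue of a finger/Whitney move. Once this local model is verified, the lemma follows at once.
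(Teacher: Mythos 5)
Your proposal follows the same route as the paper: replace the local motion picture near the double point by a $1$-surgery along two bands producing a split Hopf link, isotope the Hopf link into $W$, split it there by a single crossing change (which now carries the unique double point), and cap off with a $2$-surgery; the paper verifies the local model and the rel-boundary ambient isotopy by an explicit motion picture (its Figure~7), which is exactly the ``local model'' you flag as the remaining obstacle. One small caution: for the component count $\# L_1 = \# L + 2$ to be consistent with condition (2), each of the two bands must be a fission band splitting a small circle off a single strand (the two circles forming the Hopf link), rather than bands ``joining the two strands'' as you describe; with that adjustment your argument matches the paper's.
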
 

\begin{proof}
Let $U$ be a $3$-disk in $\R^3$ which is a regular neighborhood of the double point of  the singular link appearing as the cross-section of $F(L \to L')_{[a,b]}$ 
at $t = (a+b)/2$.  The motion picture in the top of Figure~\ref{fig:hopf1} shows the 
restriction of $F(L \to L')_{[a,b]}$ to $U \times [a,b]$ in a case of a negative double point.   
Replace it with the surface as shown in the second and the third lows of Figure~\ref{fig:hopf1}, namely, which is the realizing surface of a sequence 
$\mathcal{L}^{(0)}: L=L^{(0)}_0 \to L^{(0)}_1 \to L^{(0)}_2 \to L^{(0)}_3 \to L^{(0)}_4= L'$ satisfying the following. 
\begin{itemize} 
\item[(1)] $F(L \to L')_{[a,b]}$ is ambient isotopic to 
$F(\mathcal{L}^{(0)})_{[a,b]}$ by an isotopy of $\R^3 \times [a,b]$ rel 
$\R^3 \times \{a, b\}$.  
\item[(2)] $L \to L^{(0)}_1$ is a $1$-surgery along two bands, and $L^{(0)}_1$ is the split union of a link $\tilde{L'}$ and a Hopf link $H^{(0)}$, where $\tilde{L'}$ is equivalent to $L'$. 
\item[(3)] $L^{(0)}_1 \to L^{(0)}_2$ is a simple crossing change deformation between the components of $H^{(0)}$. Let $O^{(0)}_2$ be the trivial  $2$-component link obtained from $H^{(0)}$.  
So $L^{(0)}_2$ is a split union of $\tilde{L'}$ and $O^{(0)}_2$. 
\item[(4)] $L^{(0)}_2 \to L^{(0)}_3=\tilde{L'}$ is a $2$-surgery that eliminates $O^{(0)}_2$.  
\item[(5)] $L^{(0)}_3 \to L^{(0)}_4$ is an isotopy deformation. 
\end{itemize}
We change the sequence $\mathcal{L}^{(0)}$  by inserting an isotopic deformation 
just after $L \to L^{(0)}_1$ 
such that a small $3$-disk in $\R^3$ containing the Hopf link $H^{(0)}$ is moved into $W$.  Then we obtain a desired sequence $\mathcal{L}$. 
\end{proof}

%
\begin{figure}[h]
 \centering
 \includegraphics[width=8.5cm]{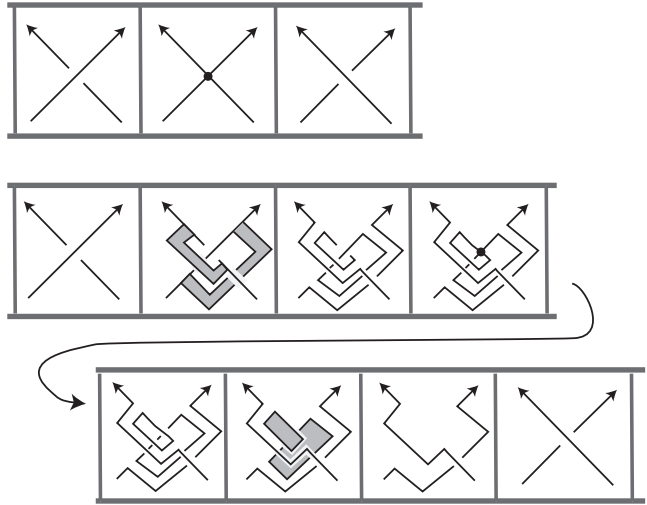}
 \caption{Moving a double point}
 \label{fig:hopf1}
\end{figure}

\vspace{0.3cm} 

Let $\mathcal{L}: L_0 \to \dots \to L_m$ be a link transformation sequence and let 
$F(\mathcal{L})_{[a,b]}$ be the realizing surface in $\R^3 \times [a,b]$.  
Suppose that for some $n$, $L_n \to L_{n+1}$ in $\mathcal{L}$ is a simple crossing  change deformation.  Let $W$ be a small $3$-disk in $\R^3$ such that $W \times [a,b] \cap  F(\mathcal{L})_{[a,b]} = \emptyset$. 

\begin{lem} \label{lem:doubleHopfB} 
In the above situation, there is a link transformation sequence 
$\mathcal{L}^{(1)}: L^{(1)}_0 \to \dots \to L^{(1)}_{m+3}$ satisfying the following. 
\begin{itemize} 
\item[(1)] 
$F(\mathcal{L})_{[a,b]}$ is ambient isotopic to 
$F(\mathcal{L}^{(1)})_{[a,b]}$ by an isotopy of $\R^3 \times [a,b]$ rel 
$\R^3 \times \{a, b\}$.  
\item[(2)] 
For each $k \in \{1, \dots, n\}$,  
$L^{(1)}_{k-1} \to L^{(1)}_{k}$ is the same with $L_{k-1} \to L_{k}$.  
\item[(3)] 
$L^{(1)}_{n} \to L^{(1)}_{n+1}$ is a $1$-surgery along two bands, and $L^{(1)}_{n+1}$ is the split union of a link $\tilde{L}_{n+1}$ and a Hopf link, where $\tilde{L}_{n+1}$ is a link equivalent to $L_{n+1}$. 
\item[(4)] 
$L^{(1)}_{n+1} \to L^{(1)}_{n+2}$ is an isotopic deformation moving $\tilde{L}_{n+1}$ onto $L_{n+1}$  and moving the Hopf link into $W$ so that $L^{(1)}_{n+2}$ is the split union of $L_{n+1}$ and a Hopf link $H$ in $W$.  
\item[(5)] 
For each $k \in \{n+3, \dots, m+1\}$, 
$L^{(1)}_{k}$ is the split union of $L_{k-1}$ and $H$, and the link transformation  
$L^{(1)}_{k-1} \to L^{(1)}_{k}$ is the same with 
 $L_{k-2} \to L_{k-1}$ on $L_{k-2}$ and the identity on $H$.  
\item[(6)] 
$L^{(1)}_{m+1} \to L^{(1)}_{m+2}$ is a simple Hopf-splitting deformation that changes the Hopf link $H$ to a trivial $2$-component link in $W$. 
\item[(7)] 
$L^{(1)}_{m+2} \to L^{(1)}_{m+3}$ is a $2$-surgery that eliminates the trivial $2$-component link in $W$. Then $L^{(1)}_{m+3} = L_m$. 
\end{itemize}
\end{lem}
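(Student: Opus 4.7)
The plan is to apply Lemma~\ref{lem:doubleHopfA} locally at the single crossing change step $L_n \to L_{n+1}$, and then to ``postpone'' the Hopf-splitting and $2$-surgery it produces all the way to the end of the time interval $[a,b]$, letting the Hopf link $H$ sit in $W$ as a spectator cylinder during the intervening transformations.

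Fix a division $a = t_0 < \dots < t_m = b$ realizing $\mathcal{L}$, and apply Lemma~\ref{lem:doubleHopfA} to the simple crossing change $L_n \to L_{n+1}$ on the slab $\R^3 \times [t_n, t_{n+1}]$ using the prescribed $3$-disk $W$. This replaces the realizing surface of that slab, up to an ambient isotopy of $\R^3 \times [t_n, t_{n+1}]$ rel boundary, by the realizing surface of a four-step subsequence: a $1$-surgery along two bands creating a Hopf link summand $H$, an isotopy moving the remaining part onto $L_{n+1}$ and $H$ into $W$, a simple Hopf-splitting of $H$ in $W$, and a $2$-surgery eliminating the resulting trivial $2$-component link in $W$. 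Performing this local replacement inside $F(\mathcal{L})_{[a,b]}$ gives a first ambient isotopy to a surface in which $H$ is both created and destroyed entirely within $\R^3 \times [t_n, t_{n+1}]$.

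Next, I would stretch the lifespan of $H$ across the remainder of the interval. The key point is the hypothesis $W \times [a,b] \cap F(\mathcal{L})_{[a,b]} = \emptyset$: once $H$ has been moved into $W$, it is disjoint from the main portion of the surface, which continues to realize $L_{n+1} \to \dots \to L_m$ in the complement of $W$. The Hopf-splitting cap and $2$-surgery cap produced above both live in the $4$-ball $W \times [s_2, t_{n+1}]$ for an appropriate intermediate time $s_2 \in [t_n, t_{n+1}]$. A second ambient isotopy, supported in $W \times [s_2, b]$ and rel its boundary, slides these two caps to the end of the interval: afterwards $H$ persists as a product cylinder on $W \times [s_2, \tilde{s}]$ for some $\tilde{s}$ close to $b$, then is split and eliminated in $W \times [\tilde{s}, b]$. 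This slide is available because both compact cobordisms (before and after sliding) in the $4$-ball $W \times [s_2, b]$ share the same boundary Hopf link on $W \times \{s_2\}$ and have the same topological type.

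Choosing a suitable new division of $[a,b]$ into $m+3$ subintervals after this rearrangement exhibits the resulting surface as $F(\mathcal{L}^{(1)})_{[a,b]}$ for a sequence meeting (2)--(7): the first $n$ transformations are unchanged; the next two are the $1$-surgery and the isotopy from Lemma~\ref{lem:doubleHopfA}, producing (3) and (4); the following $m-n-1$ transformations repeat $L_{n+1} \to \dots \to L_m$ on the main factor while acting as the identity on the spectator $H$ in $W$, giving (5); and the last two are the postponed Hopf-splitting and $2$-surgery in $W$, giving (6) and (7). Condition (1) follows by composing the two ambient isotopies above. The main obstacle is only the second ``stretching'' isotopy, but it is routine once one notes that $W \times [s_2, b]$ is a $4$-ball disjoint from the rest of the surface, so the two cobordisms in question are ambient isotopic rel boundary by a standard argument.
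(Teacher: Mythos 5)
Your proposal is correct and follows essentially the same route as the paper: apply Lemma~\ref{lem:doubleHopfA} to the slab containing the crossing change, then use an ambient isotopy supported near $W\times[\,\cdot\,,b]$ (which is disjoint from the rest of the surface by hypothesis) to postpone the Hopf-splitting deformation and the $2$-surgery until after the remaining link transformations, and finally re-divide the interval. The only cosmetic difference is that you justify the postponement by comparing the two cobordisms in the $4$-ball $W\times[s_2,b]$, whereas the paper simply asserts the ambient isotopy; your justification is sound since the deformation is just a vertical delay of the critical levels inside a product region disjoint from the rest of the surface.
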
 

\begin{proof}
Applying  Lemma~\ref{lem:doubleHopfA} to $L_n \to L_{n+1}$, we see that 
there is a link transformation sequence 
$\mathcal{L}': L'_0 \to \dots \to L'_{m+3}$ satisfying the following. 
\begin{itemize} 
\item[(1)] 
$F(\mathcal{L})_{[a,b]}$ is ambient isotopic to 
$F(\mathcal{L}')_{[a,b]}$ by an isotopy of $\R^3 \times [a,b]$ rel 
$\R^3 \times \{a, b\}$.  
\item[(2)] 
For each $k \in \{1, \dots, n\}$, 
$L'_{k-1} \to L'_{k}$ is the same with $L_{k-1} \to L_{k}$.  
\item[(3)] 
$L'_{n} \to L'_{n+1}$ is a $1$-surgery along two bands, and $L'_{n+1}$ is the split union of a link $\tilde{L}_{n+1}$ and a Hopf link, where $\tilde{L}_{n+1}$ is a link equivalent to $L_{n+1}$. 
\item[(4)] 
$L'_{n+1} \to L'_{n+2}$ is an isotopic deformation moving $\tilde{L}_{n+1}$ onto $L_{n+1}$  and moving the Hopf link into $W$ so that $L'_{n+2}$ is the split union of $L_{n+1}$ and a Hopf link $H$ in $W$.   
\item[(5)] 
$L'_{n+2} \to L'_{n+3}$ is a simple Hopf-splitting deformation that changes the Hopf link $H$ in $W$ to a trivial $2$-component link.  
\item[(6)] 
$L'_{n+3} \to L'_{n+4}$ is a $2$-surgery that eliminates the trivial $2$-component  link in $W$. Then $L'_{n+4} = L_{n+1}$. 
\item[(7)] 
For $k \in \{n+5, \dots, m+3\}$, 
$L'_{k-1} \to L'_{k}$ is the same with $L_{k-4} \to L_{k-3}$.  

\end{itemize}
By an ambient isotopy, we deform the realizing surface $F(\mathcal{L}')_{[a,b]}$ 
so that the simple Hopf-splitting deformation and the $2$-surgery in the above (5) and (6) occur after the link transformations in (7).  Then we have a desired link transformation sequence $\mathcal{L}^{(1)}$. 
\end{proof}

\vspace{0.3cm}

\begin{proof}[Proof of Theorem~\ref{thm:B}.]
Let $F$ be a surface-link.  
Divide the set of positive (or negative) double  points of $F$ into two groups, say $G'_+$ and $G''_+$ (or $G'_-$ and $G''_-$), with $\# G'_+ =  c'_+$ and $\# G''_+ =  c''_+$ (or $\# G'_- =  c'_-$ and $\# G''_- =  c''_-$).  

By a similar argument with \cite{KSS1}, there is a link transformation sequence $\mathcal{L}^{(0)}$ such that a realizing surface of 
$\mathcal{L}^{(0)}$  is equivalent to $F$,  
each crossing change deformation is simple, 
there is no $0$-surgery in  $\mathcal{L}^{(0)}$ except the first link transformation, and there is no $2$-surgery in  $\mathcal{L}^{(0)}$ except the last link transformation.  
 (Note that the first link transformation  
of $\mathcal{L}^{(0)}$  is a $0$-surgery from the empty link to a trivial link, and the last link transformation is a $2$-surgery from a trivial link to the empty link.) 

Take a double point $p_1$ from $G'_+ \cup G'_-$ and let 
$L^{(0)}_{n_0} \to L^{(0)}_{n_0+1}$ be the 
simple crossing change deformation in $\mathcal{L}^{(0)}$ whose realizing surface  
contains $p_1$.  Apply Lemma~\ref{lem:doubleHopfB} to $\mathcal{L}^{(0)}$ with $L^{(0)}_{n_0} \to L^{(0)}_{n_0+1}$  and we have a link transformation sequence 
such that a realizing surface of the sequence is equivalent to $F$, 
each crossing change deformation is simple, and the last three link transformations are a simple Hopf-splitting deformation from an H-trivial link to a trivial link, 
a $2$-surgery that eliminates the trivial $2$-component link obtained by the simple Hopf-splitting deformation, and a $2$-surgery from a trivial link to the empty link.  
Combine  the last two link transformations into a link transformation which is a $2$-surgery from a trivial link to the empty link.  Let $\mathcal{L}^{(1)}$ be the link transformation sequence. Then 
$\mathcal{L}^{(1)}$ is a link transformation sequence such that a realizing surface is equivalent to $F$,  each crossing change deformation is simple, 
and the last two link transformations are a simple Hopf-splitting deformation from an H-trivial link to a trivial link and a $2$-surgery from the trivial link to the empty link. 

Take a double point $p_2$ from $(G'_+ \cup G'_-) - \{p_1\}$ and let 
$L^{(1)}_{n_1} \to L^{(1)}_{n_1+1}$ be the 
simple crossing change deformation in $\mathcal{L}^{(1)}$ whose realizing surface  
contains $p_2$.   By the same argument as above, we have a link transformation sequence $\mathcal{L}^{(2)}$ 
such that a realizing surface of $\mathcal{L}^{(2)}$  
 is equivalent to $F$,  each crossing change deformation is simple, 
and the last three  link transformations are 
a simple Hopf-splitting deformation from an H-trivial link to 
an H-trivial link, a  simple Hopf-splitting deformation from the $H$-trivial link to 
a trivial link 
and a $2$-surgery from the trivial link to the empty link. 
 
Continue this procedure until we cover all double points belonging to $G'_+ \cup G'_-$, and we have   
a link transformation sequence 
such that a realizing surface of the sequence 
 is equivalent to $F$,  each crossing change deformation is simple, 
and the last $c'_+ + c'_- +1$ link transformations are 
$c'_+ + c'_-$ simple Hopf-splitting deformations from H-trivial links,  
and a $2$-surgery from a trivial link to the empty link. 

For this link transformation sequence, we apply a similar argument on double points  belonging to $G''_+ \cup G''_-$. 
However, this time we move the double points forward in the time direction.  
Then we have a  link transformation sequence 
such that a realizing surface of the sequence 
 is equivalent to $F$,  each crossing change deformation is simple, 
 there is no $0$-surgery in the sequence except the first link transformation, 
  there is no $2$-surgery in the sequence except the last link transformation, 
the first  $c''_+ + c''_- +1$ link transformations are a 
$0$-surgery from the empty link to a trivial link and 
$c''_+ + c''_-$  inverses of simple Hopf-splitting deformations, and the 
last $c'_+ + c'_- +1$ link transformations are 
$c'_+ + c'_-$ simple Hopf-splitting deformations and a $2$-surgery from a trivial link to the empty link. 

Combine the first $c''_+ + c''_-$ inverses of  simple Hopf-splitting deformations to a 
Hopf-splitting deformation, and combine the last $c'_+ + c'_-$  simple Hopf-splitting deformations 
to a Hopf-splitting deformation.  
Then we have a  link transformation sequence, say 
$\mathcal{L}' : \emptyset \to O'_-  \to L'_- \to 
\dots \to L'_+ \to O'_+ \to \emptyset$, where $L'_- \to 
 \dots \to L'_+$ is a sequence consisting of $1$-surgeries, such that 
 a realizing surface of the sequence  $\mathcal{L}'$ is equivalent to $F$.  
 Applying the argument in \cite{KSS1} to this sequence with $L'_- \to 
 \dots \to L'_+$, we obtain a sequence 
 $\mathcal{L} : \emptyset \to O_-  \to L_- \to K_- \to L_0 \to K_+ \to 
L_+ \to O_+ \to \emptyset$ as desired.    
\end{proof}




\end{document}